\newtheorem{thm}{Theorem}[section]
\newtheorem{lem}[thm]{Lemma}
\newtheorem*{rem}{Remark}
\newtheorem{cor}[thm]{Corollary}
\begin{document}
\newcounter{itemcounter}
\newcounter{itemcounter2}
\title[A conjecture of Han on 3-cores and modular forms]
{A conjecture of Han on 3-cores and modular forms}
\author{Amanda Clemm}
\address{Department of Mathematics, Emory University, Emory, Atlanta, GA 30322}
\email{aclemm@emory.edu}
\subjclass[2010]{05XX, 11F11}
\keywords{hook length, partition, modular form}
\begin{abstract}
In his study of Nekrasov-Okounkov type formulas on ``partition theoretic" expressions for families of infinite products, Han discovered seemingly unrelated $q$-series that are supported on precisely the same terms as these infinite products. 
In earlier work with Ono, Han proved one instance of this occurrence that exhibited a relation between numbers $a(n)$ that are given in terms of hook lengths of partitions, with numbers $b(n)$ that equal the number of 3-core partitions of $n$. Recently Han revisited the $q$-series with coefficients $a(n)$ and $b(n)$, and numerically found a third $q$-series whose coefficients appear to be supported on the same terms. Here we prove Han's Conjecture about this third series by proving a general theorem about this phenomenon.
\end{abstract}
\maketitle
\noindent

\medskip

\section{Introduction}

In their study of  supersymmetric gauge theory, Nekrasov and Okounkov discovered a striking infinite product identity \cite{nekrasov244seiberg}. This surprising theorem relates the sum over products of partition hook lengths to the powers of Euler products and has been generalized in many ways to give expressions for many infinite product $q$-series. The original identity is given by
\begin{equation*}
F_z(x) := \displaystyle\sum_{\lambda} x^{|\lambda|} \displaystyle\prod_{h \in \mathcal{H}(\lambda)} \left(1-\frac{z}{h^2} \right) = \displaystyle\prod_{n=1}^{\infty} (1-x^n)^{z-1},
\end{equation*}
where the sum is over integer partitions, $|\lambda|$ the integer partitioned by $\lambda$, and $\mathcal{H}(\lambda)$ denotes the multiset of classical hook lengths associated to a partition $\lambda$.  

The Nekrasov-Okounkov formula specializes in the case $z=2$ and $z=4$ to two classical $q$-series identities. The first is a special case of Euler's Pentagonal Number Theorem, and the second gives Jacobi's famous identity for the product $\displaystyle\prod_{n=1}^{\infty}(1-x^n)^3$, \cite{han2009some}. 

\begin{align*}
F_2(x) &= \prod_{n=1}^{\infty}(1-x^n) = \sum_{n=-\infty}^{\infty} (-1)^n x^{\frac{3n^2+n}{2}}, \hspace{.25in} (\mathrm{Euler})
\\ F_4(x) &= \prod_{n=1}^{\infty}(1-x^n)^3 = \sum_{n=0}^{\infty} (-1)^n(2n+1)x^{\frac{n^2+n}{2}}. \hspace{.25in} (\mathrm{Jacobi})
\end{align*}

In \cite{han2008nekrasov}, Han extended the Nekrasov-Okounkov identity to consider the number of $t$-core partitions of $n$. While working on this generalization, Han investigated the nonvanishing of infinite product coefficients. For example, he considers the infinite product, $$\displaystyle\prod_{n \geq 1} \displaystyle\frac{(1-x^{sn})^{t^2/s}}{1-x^n},$$ and conjectures in \cite{han2009some} that the coefficient of $x^n$ is not equal to 0 for $t \geq 5$, $t, s$ positive integers such that $s | t$ and $t \neq 10$. Letting $s=1$ and $t=5$, Han reformulates the famous conjecture of Lehmer that the coefficients of 
\begin{equation*}
x \displaystyle\prod_{n \geq 1}(1-x^n)^{24}=\displaystyle\sum_{n \geq 1} \tau(n) x^n
\end{equation*}
never vanish. 

In \cite{han2009some}, Han formulated a conjecture comparing the nonvanishing of terms of $F_9(x)$ with another infinite Euler product given by 
\begin{align*}
C_3(x) &= \displaystyle\sum_{n=1}^{\infty} b(n)x^n := \displaystyle\prod_{n=1}^{\infty} \displaystyle\frac{(1-x^{3n})^3}{1-x^n}\\
&=1+x+2x^2+2x^4+ \dots +2x^{14}+3x^{16}+2x^{17}+2x^{20} +\dots. 
\tag{1.1}
\label{1.1}
\end{align*} Recall $F_9(x)$ is the series given by
\begin{align*}
F_9(x)&= \displaystyle\sum_{n=1}^{\infty} a(n)x^n := \displaystyle\prod_{n=1}^{\infty} (1-x^n)^8 \\
&=1-8x+20x^2-70x^4+ \dots -520x^{14}+57x^{16}+560x^{17}+182x^{20} +\dots
\tag{1.2}
\label{1.2}
\end{align*} Based on numerical evidence, Han conjectured that the non-zero coefficients of $F_9(x)$ and $C_3(x)$ are supported on the same terms; assuming the notation above, $a(n)=0$ if and only if $b(n)=0$. 

This conjecture is proved in a joint paper with Ono \cite{han2011hook}. In addition to proving the conjecture, Han and Ono proved $a(n)=b(n)=0$ precisely for those non-negative integers $n$ for which $\textrm{ord}_p(3n+1)$ is odd for some prime $p \equiv 2 \pmod{3}$. 

Recently Han discovered another series that appears to be supported on the same terms as $C_3(x)$ and $F_9(x)$. This series is given by,  
\begin{align*}
C(x) &= \displaystyle\sum_{n=1}^{\infty} c(n)x^n := \displaystyle\prod_{n=1}^{\infty} (1-x^n)^2(1-x^{3n})^2 \\
&= 1-2x-x^2+5x^4+ \dots + 8x^{14} -6x^{16}-10x^{17}-x^{20} + \dots
\tag{1.3}
\label{1.3}
\end{align*} Based on numerical evidence, Han conjectured $a(n)=0$ if and only if $b(n)=0$ if and only if $c(n)=0$. 

Here we prove the following general theorem which produces infinitely many modular forms, including those in (\ref{1.2}) and (\ref{1.3}) which are supported precisely on the same terms as (\ref{1.1}). 

It is convenient to normalize (\ref{1.1}) as shown below. 
\begin{align*}
\tag{2.1}
\label{2.1}
\mathcal{B}(z)&= \frac{\eta(9z)^3}{\eta(3z)}=\displaystyle\sum_{n=1}^{\infty} b^*(n)q^n:=\displaystyle\sum_{n=0}^{\infty}b(n)q^{3n+1} \\
&=q+q^4+2q^7+2q^{13}+q^{16}+2q^{19}+q^{25}+2q^{28}+\dots.
\end{align*} 

\begin{thm}
\label{A(n) theorem}
Suppose that $f(z)=\displaystyle\sum_{n=1}^{\infty} A(n)q^n$ is an even weight newform with trivial Nebentypus that has complex multiplication by $\mathbb{Q}(\sqrt{-3})$ and a level of the form $3^s$, where $s \geq 2$. Then the coefficients $A(n)=0$ if and only if $b(n)=0$. More precisely, $A(n)=b^*(n)=0$ for those non-negative integers $n$ for which $\mathrm{ord}_p(n)$ is odd for some prime $p \equiv 2 \pmod{3}$. 
\end{thm}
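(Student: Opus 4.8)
The plan is to attach to $f$ a Hecke Grossencharacter of $K=\Q(\sqrt{-3})$ and to read off the vanishing of $A(n)$ from the way rational primes split in $K$, matching this against the same analysis for $\calB$. Since $f$ is a newform with CM by $K$, trivial Nebentypus, and level $3^s$ with $s\ge 2$, the standard theory of CM newforms produces a Hecke character $\psi$ of $K$ of infinity type $k-1$, where $k$ is the (necessarily even) weight, whose conductor $\mathfrak f$ is a power of the ramified prime $\mathfrak p_3=(\sqrt{-3})$, such that
\[
A(n)=\sum_{\Nm(\mathfrak a)=n}\psi(\mathfrak a),
\]
the sum over integral ideals of norm $n$, with $\psi$ vanishing on ideals not prime to $\mathfrak f$. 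I would first record this correspondence, observe that $A$ is multiplicative, and note that the Hecke relation under trivial Nebentypus reads $A(p^{m+1})=A(p)A(p^m)-p^{k-1}A(p^{m-1})$. It then suffices to decide when $A(p^m)=0$ prime by prime.

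I would split into the three cases dictated by the factorization of $p$ in $K$. If $p=3$ (ramified, dividing $\mathfrak f$), the only ideal of norm $3^m$ is $\mathfrak p_3^{\,m}$, which is not prime to $\mathfrak f$, so $A(3^m)=0$ for every $m\ge 1$; hence $A(n)=0$ whenever $3\mid n$. If $p\equiv 2\pmod 3$ (inert), then $A(p)=0$ and the recursion collapses to $A(p^{m+1})=-p^{k-1}A(p^{m-1})$, giving $A(p^{2j})=(-p^{k-1})^{j}\neq 0$ and $A(p^{2j+1})=0$; thus $A(p^m)=0$ exactly when $m$ is odd. Neither case involves any cancellation. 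The vanishing of $b^*$ obeys the identical rule: for $3\nmid n$ one has the multiplicative formula $b^*(n)=\sum_{d\mid n}\leg{d}{3}$, whose local factor at $p\equiv 2\pmod 3$ equals $0$ precisely when $\ord_p(n)$ is odd, while $b^*(n)=0$ for $3\mid n$.

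The crux is the split case $p\equiv 1\pmod 3$, where $(p)=\mathfrak p\bar{\mathfrak p}$ and
\[
A(p^m)=\sum_{a=0}^{m}\psi(\mathfrak p)^{a}\,\psi(\bar{\mathfrak p})^{m-a}=\frac{\alpha^{m+1}-\beta^{m+1}}{\alpha-\beta},\qquad \alpha=\psi(\mathfrak p),\ \beta=\psi(\bar{\mathfrak p}),
\]
a genuine sum whose terms could in principle cancel. Trivial Nebentypus forces $\alpha\beta=p^{k-1}$, and the defining property $|\psi(\mathfrak p)|=\Nm(\mathfrak p)^{(k-1)/2}$ then yields $\beta=\bar\alpha$, so $A(p^m)=0$ would force $(\alpha/\beta)^{m+1}=1$, i.e. $\alpha/\beta$ a root of unity. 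Writing $\mathfrak p=(\pi)$ gives $\alpha/\beta=(\text{root of unity})\cdot(\pi/\bar\pi)^{k-1}$, so this requires $\pi/\bar\pi$ itself to be a root of unity; as the only roots of unity in $K$ lie in $\mu_6$, a direct computation shows that for a split prime $p>3$ this never happens (it would force $\Nm(\pi)\in\{a^2,3a^2\}$, incompatible with $\Nm(\pi)=p$ being a split prime). Hence $\alpha/\beta$ is not a root of unity and $A(p^m)\neq 0$ for all $m$. I expect this non-cancellation at split primes to be the main obstacle, since it is the only place where the precise values of $\psi$, rather than mere ideal counting, are needed.

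Finally I would reassemble the local computations by multiplicativity. A coefficient $A(n)$ is nonzero only when $3\nmid n$ and every inert prime $p\equiv 2\pmod 3$ divides $n$ to even order; in particular $A(n)$, like $b^*(n)$, is supported on $n\equiv 1\pmod 3$, and on this support the two vanish on exactly the same set, namely those $n$ for which $\ord_p(n)$ is odd for some $p\equiv 2\pmod 3$ (the remaining multiples of $3$ being killed identically, by ramification). This gives $A(n)=0$ if and only if $b^*(n)=0$ and pins down the common vanishing locus, completing the proof.
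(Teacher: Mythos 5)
Your proposal is correct, and its skeleton is the same as the paper's: pass to the CM/Hecke-character description of $f$, determine the vanishing of $A$ prime by prime according to whether $p$ is ramified, inert, or split in $\mathbb{Q}(\sqrt{-3})$, and reassemble by multiplicativity against the divisor-sum formula $b^*(n)=\sum_{d\mid n}\left(\frac{d}{3}\right)$ from Han--Ono. The differences are in execution, and at two points your version is actually tighter than the paper's. First, for $p=3$ the paper quotes Theorem 2.27(3) of Ono's book (a newform with $p^2\mid N$ has vanishing $p$-th coefficient, which is where $s\ge 2$ enters) and then extends to $A(3^t)=0$; you instead read this off the Grossencharacter: since the level is $3\cdot\mathrm{Nm}(\mathfrak{f})=3^s$ with $s\ge2$, the conductor $\mathfrak{f}$ is a nontrivial power of $(\sqrt{-3})$, so every ideal of norm $3^m$ meets $\mathfrak{f}$ and $A(3^m)=0$ directly. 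Second, and more substantively, at split primes the paper proves only $A(p)\neq 0$ (the purely-imaginary argument) and then asserts that $A(p^t)\neq 0$ ``follows by induction''; but the Hecke recursion $A(p^{t+1})=A(p)A(p^t)-p^{k-1}A(p^{t-1})$ does not support such an induction, since nonvanishing of $A(p)$, $A(p^{t-1})$, $A(p^t)$ does not preclude cancellation at the next step. Your argument --- that $A(p^m)=\bigl(\alpha^{m+1}-\bar{\alpha}^{m+1}\bigr)/\bigl(\alpha-\bar{\alpha}\bigr)$ can vanish only if $\alpha/\bar{\alpha}$ is a root of unity, which would force $\pi/\bar{\pi}\in\mu_6$ and hence $\mathrm{Nm}(\pi)\in\{a^2,3a^2\}$, impossible for a split prime --- supplies exactly the missing step; an equivalent, slightly quicker finish is to note that $\alpha^{m+1}=\bar{\alpha}^{m+1}$ would give $\mathfrak{p}^{(k-1)(m+1)}=\bar{\mathfrak{p}}^{(k-1)(m+1)}$ as ideals, contradicting unique factorization since $p$ splits. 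You are also more careful than the paper about the support modulo $3$: both $A(n)$ and $b^*(n)$ vanish identically when $3\mid n$, a case not captured by the criterion ``$\mathrm{ord}_p(n)$ odd for some $p\equiv 2\pmod 3$,'' and your closing paragraph accounts for this correctly, whereas the paper's Lemma 2.2 and the theorem statement gloss over it.
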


\begin{rem}
Here we let $q:=e^{2 \pi iz}$ and $\displaystyle\sum_{n=1}^{\infty} A(n)q^n$ is the usual Fourier expansion at infinity. 
\end{rem}

\begin{rem}
Consider the normalized function of $a(n)$ and $c(n)$ given by, 
\begin{align*}
\mathcal{A}(z) &= \displaystyle\sum_{n=1}^{\infty} a^*(n)q^n := \displaystyle\sum_{n=0}^{\infty} a(n)q^{3n+1} \\
&=q-8q^4+20q^7-70q^{13}+64q^{16}+56q^{19}-125q^{25}-160q^{28}+\dots
\tag{2.2}
\label{2.2}
\end{align*}
and 
\begin{align*}
\mathcal{C}(z) &= \displaystyle\sum_{n=1}^{\infty} c^*(n)q^n := \displaystyle\sum_{n=0}^{\infty} c(n)q^{3n+1} \\
&= q - 2q^4 - q^7 + 5q^{13} + 4q^{16} - 7q^{19} - 5q^{25} + 2q^{28} + \dots 
\tag{2.3}
\label{2.3}
\end{align*}
Theorem~\ref{A(n) theorem} implies the original work of Han and Ono. As explained in \cite{han2011hook}, $\mathcal{A}(z)$ is a weight 4 newform with complex multiplication by $\mathbb{Q}(\sqrt{-3})$ with level $9$. Theorem~\ref{A(n) theorem} also implies Han's new conjecture concerning coefficients of $(1.3)$ because $\mathcal{C}(z)$ is the weight 2 complex multiplication form for the elliptic curve with complex multiplication by $\mathbb{Q}(\sqrt{-3})$ given by $y^2+y=x^3-7$ with level $3^3=27$ \cite{martin1997eta}. 
\end{rem}

\begin{rem}
It turns out that more is true about the relationship between the two series in (\ref{1.2}) and (\ref{1.3}). 
If $p \equiv 1 \pmod{3}$ is prime, then we have that $c^*(p)$ divides $a^*(p)$. We will prove this statement in Section 3.1. 
\end{rem}

To prove Theorem~\ref{A(n) theorem}, we make use of the known description of (\ref{1.1}), the generating function for the 3-core partition function, and then generalize the work in \cite{han2011hook} to extend to this situation. 

\section{Preliminaries}
We begin by recalling the exact formula for the coefficients $b^{*}(n)$ of the modular form $\mathcal{B}(z)$ defined below. Recall the Dedekind's eta function, denoted $\eta(z)$, is defined by the infinite product $$\eta(z):=q^{1/24}\prod_{n=1}^{\infty} (1-q^n).$$ The coefficients $b^{*}(n)$ are given by 

\begin{align*}
\tag{2.1}
\label{2.1}
\mathcal{B}(z)&= \frac{\eta(9z)^3}{\eta(3z)}=\displaystyle\sum_{n=1}^{\infty} b^*(n)q^n:=\displaystyle\sum_{n=0}^{\infty}b(n)q^{3n+1} \\
&=q+q^4+2q^7+2q^{13}+q^{16}+2q^{19}+q^{25}+2q^{28}+\dots.
\end{align*} 

\begin{lem}[Lemma 2.5 of \cite{han2011hook}]
Assuming the notation above, we have that 
\label{Lemma 2.4 of H-0}
$$
\mathcal{B}(z)=\displaystyle\sum_{n=1}^{\infty} b^{*}(n)q^n=\displaystyle\sum_{n=0}^{\infty}b(n)q^{3n+1} = \displaystyle\sum_{n=0}^{\infty} \displaystyle\sum_{d|3n+1} \left(\frac{d}{3}\right) q^{3n+1}.
$$\end{lem}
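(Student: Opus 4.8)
The plan is to reduce the Lemma to the classical count of $3$-core partitions, $b(n)=\sum_{d\mid 3n+1}\left(\frac{d}{3}\right)$ (due to Granville and Ono), and to reconstruct that formula through the theory of weight-one theta series attached to the norm form of $\Q(\sqrt{-3})$. The two outer equalities are essentially bookkeeping: writing $\eta(9z)^3/\eta(3z)=q\prod_{n\ge 1}(1-q^{9n})^3/(1-q^{3n})$ and substituting $q\mapsto q^3$ in the product defining $C_3(x)$ shows at once that $\eta(9z)^3/\eta(3z)=\sum_{n\ge0}b(n)q^{3n+1}$, so that $b^*(m)=b((m-1)/3)$ when $m\equiv1\pmod3$ and $b^*(m)=0$ otherwise. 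The real content is therefore the divisor-sum expression for $b(n)$.

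For this I would realize $\mathcal B(z)$ as a theta series. Using the Borwein cubic theta identity $c(q)=3\,\eta(3z)^3/\eta(z)$, where $c(q)=\sum_{m,n\in\Z}q^{(m+\frac13)^2+(m+\frac13)(n+\frac13)+(n+\frac13)^2}$, and replacing $q$ by $q^3$, one obtains $\mathcal B(z)=\tfrac13\,c(q^3)$. Writing $X=3m+1$ and $Y=3n+1$, this reads
\[
\mathcal B(z)=\frac13\sum_{X\equiv Y\equiv1\,(3)} q^{\frac{X^2+XY+Y^2}{3}},
\]
so the coefficient of $q^{3n+1}$ is $\tfrac13$ times the number of pairs $(X,Y)$ with $X\equiv Y\equiv1\pmod3$ and $X^2+XY+Y^2=9n+3$. (Equivalently, one can bypass the eta manipulation entirely by encoding a $3$-core through its abacus/beta-set, which produces exactly this quadratic generating function.)

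It then remains to evaluate this restricted representation count. Here I would invoke the classical formula, coming from unique factorization in $\Z[\omega]$ with $\omega=e^{2\pi i/3}$, that the total number of representations of $N\ge1$ by $x^2+xy+y^2$ equals $6\sum_{d\mid N}\left(\frac{d}{3}\right)$. For $N=9n+3$ every representation forces $X\equiv Y\pmod3$ with $X\not\equiv0$ (else $9\mid N$), so the solutions split into the classes $X\equiv Y\equiv1$ and $X\equiv Y\equiv2$; the fixed-point-free involution $(X,Y)\mapsto(-X,-Y)$ interchanges them, so exactly half the representations lie in the first class. Finally, since $\gcd(3,3n+1)=1$, the divisors of $9n+3=3(3n+1)$ contribute the same character sum as those of $3n+1$. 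Combining, the coefficient of $q^{3n+1}$ is $\tfrac13\cdot\tfrac12\cdot6\sum_{d\mid 3n+1}\left(\frac{d}{3}\right)=\sum_{d\mid3n+1}\left(\frac{d}{3}\right)$, as claimed.

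The step I expect to be the main obstacle is the first identification $\mathcal B(z)=\tfrac13 c(q^3)$, i.e.\ the passage from the eta quotient to the theta series. One clean way to secure it is to observe that both sides are holomorphic modular forms of weight one on $\Gamma_0(27)$ with Nebentypus $\left(\frac{\cdot}{3}\right)$ and to match finitely many coefficients; but weight one lacks a convenient valence bound, so I would instead establish it as a genuine $q$-series identity via the Jacobi triple product, the route used to derive the Borwein cubic theta relations, which is elementary if somewhat computational. The remaining ingredients, the representation-number formula and the congruence bookkeeping, are standard.
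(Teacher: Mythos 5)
Your proof is correct, but it is worth pointing out that the paper itself offers no proof of this statement at all: the lemma is imported verbatim as Lemma 2.5 of the Han--Ono paper \cite{han2011hook}, which in turn rests on the classical formula of Granville and Ono counting $3$-core partitions, $b(n)=\sum_{d\mid 3n+1}\left(\frac{d}{3}\right)$. So your argument is genuinely different in that it is self-contained: the routine step (matching $\eta(9z)^3/\eta(3z)=\sum_{n\ge 0}b(n)q^{3n+1}$ by substituting $q^3$ into the product defining $C_3$) agrees with what the citation implicitly uses, but you then rederive the divisor-sum identity from scratch by writing $\mathcal{B}(z)=\frac{1}{3}c(q^3)$ via the Borwein cubic theta function, reducing to the representation count $r(N)=6\sum_{d\mid N}\left(\frac{d}{3}\right)$ for the Eisenstein norm form, and handling the congruence classes with the involution $(X,Y)\mapsto(-X,-Y)$ (the mod-$3$ bookkeeping here is right: $9\nmid 9n+3$ rules out $X\equiv Y\equiv 0$, and the divisors $3d$ of $3(3n+1)$ contribute zero to the character sum). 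What the paper's citation buys is brevity; what your route buys is transparency about the arithmetic source of the formula, namely unique factorization in $\Z[\omega]$. One small correction: your stated reason for avoiding the modular-forms verification of $\mathcal{B}(z)=\frac13 c(q^3)$ — that ``weight one lacks a convenient valence bound'' — is not accurate, since the Sturm bound argument applies to holomorphic forms of any positive weight, weight one included; the Jacobi triple product route you propose instead is nevertheless a perfectly valid (and classical) way to establish the Borwein identity.
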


The following lemma describes the nonvanishing conditions for the series (\ref{2.1}) as described in \cite{han2011hook}. 

\begin{lem}
\label{Nonvanishing (1) Lemma}
Assume the notation above. Then $b^*(n)=0$ if and only if $n$ is a non-negative integer for which $\mathrm{ord}_p(n)$ is odd for some prime $p \equiv 2 \pmod{3}$.
\end{lem}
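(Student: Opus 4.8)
The plan is to turn the problem into a purely elementary question about a multiplicative arithmetic function, using the closed form for $\mathcal{B}(z)$ provided by Lemma~\ref{Lemma 2.4 of H-0}. First I would read off from
$$\mathcal{B}(z)=\sum_{n=0}^{\infty}\ \sum_{d\mid 3n+1}\left(\frac{d}{3}\right)q^{3n+1}$$
two facts simultaneously. On the one hand, only exponents $N\equiv 1\pmod 3$ occur, so $b^*(N)=0$ automatically whenever $N\not\equiv 1\pmod 3$. On the other hand, for $N\equiv 1\pmod 3$ we have the exact formula $b^*(N)=\sum_{d\mid N}\chi(d)$, where $\chi(d):=\left(\frac{d}{3}\right)$ is the nontrivial Dirichlet character modulo $3$ (so $\chi(d)=1,-1,0$ according as $d\equiv 1,2,0\pmod 3$). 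Since $\chi$ is completely multiplicative, the divisor sum $S(N):=\sum_{d\mid N}\chi(d)$ is a multiplicative function of $N$, and therefore it suffices to evaluate it on prime powers.

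Next I would carry out the prime-power computation, which is the crux of the matter. For a prime power $p^a$ we have $S(p^a)=\sum_{j=0}^{a}\chi(p)^j$, and there are exactly three cases according to $p$ modulo $3$: if $p=3$ then $\chi(p)=0$ and $S(p^a)=1$; if $p\equiv 1\pmod 3$ then $\chi(p)=1$ and $S(p^a)=a+1$; and if $p\equiv 2\pmod 3$ then $\chi(p)=-1$, so the alternating sum gives $S(p^a)=1$ when $a$ is even and $S(p^a)=0$ when $a$ is odd. By multiplicativity, for $N\equiv 1\pmod 3$ we obtain $S(N)=\prod_{p^a\|N}S(p^a)$, a product whose factors are all positive integers except for those attached to primes $p\equiv 2\pmod 3$, which vanish precisely when the exponent is odd. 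Consequently $S(N)=0$ if and only if some factor vanishes, i.e. if and only if $\mathrm{ord}_p(N)$ is odd for some prime $p\equiv 2\pmod 3$; and when no such prime occurs the product is a strictly positive integer, hence nonzero. This establishes the desired biconditional on the support $N\equiv 1\pmod 3$.

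It remains to check that the stated criterion is consistent with the vanishing forced off the support. For $N\equiv 2\pmod 3$ we already know $b^*(N)=0$, and an elementary parity argument confirms the criterion: writing $N=3^{c}M$ with $\gcd(M,3)=1$, if every prime $p\equiv 2\pmod 3$ divided $M$ to an even power then, using $2^2\equiv 1\pmod 3$, each prime-power factor of $M$ would be $\equiv 1\pmod 3$, forcing $N\equiv 0$ or $1\pmod 3$; hence some prime $p\equiv 2\pmod 3$ must occur in $N$ to an odd power. The ramified prime $3$ itself always contributes the trivial factor $S(3^c)=1$, so it never influences whether $b^*$ vanishes, which is exactly why the criterion singles out the inert primes $p\equiv 2\pmod 3$.

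I expect no real obstacle here: once Lemma~\ref{Lemma 2.4 of H-0} supplies the divisor-sum formula, the proof is a routine analysis of a multiplicative function together with a short parity check on residues modulo $3$. The only point that deserves a moment's care is the nonvanishing direction—verifying that the product of prime-power values cannot cancel to zero—but this is immediate because each factor $S(p^a)$ is a nonnegative integer that is strictly positive exactly when no inert prime appears to an odd power.
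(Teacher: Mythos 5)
Your overall strategy is the right one, and it is essentially the argument the paper itself relies on: the paper gives no proof of this lemma at all, deferring to \cite{han2011hook}, and the natural argument supplied by that citation is exactly your computation --- read off $b^*(N)=\sum_{d\mid N}\left(\frac{d}{3}\right)$ on the support from Lemma~\ref{Lemma 2.4 of H-0}, note that this divisor sum is multiplicative, evaluate it on prime powers ($1$ at powers of $3$, $a+1$ at $p\equiv 1\pmod 3$, and $1$ or $0$ at $p\equiv 2\pmod 3$ according to the parity of the exponent), and conclude that a product of nonnegative integers vanishes exactly when some factor does. That part of your writeup is correct and complete, and your parity argument for $N\equiv 2\pmod 3$ is also correct.

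The gap is in your ``off the support'' paragraph: you verify the criterion only for $N\equiv 2\pmod 3$ and are silent about $N\equiv 0\pmod 3$, where $b^*(N)=0$ as well, since no exponent divisible by $3$ appears in $\mathcal{B}(z)$. There the biconditional, as literally stated in the lemma, is false: for $N=3$ (or $N=12=2^2\cdot 3$, or $N=21=3\cdot 7$) we have $b^*(N)=0$, yet no prime $p\equiv 2\pmod 3$ divides $N$ to an odd order --- indeed your own parity argument shows that multiples of $3$ can have every inert prime appearing to an even power. So no argument can close this case; it is the statement, not your proof, that needs repair. The source of the trouble is the translation from Han--Ono: their result is phrased in terms of $\mathrm{ord}_p(3n+1)$, and $3n+1$ is never divisible by $3$, so this case cannot arise in their formulation, whereas re-indexing by $b^*(n)$ and $\mathrm{ord}_p(n)$ silently enlarges the domain. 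The fix is either to restrict the lemma to $n\equiv 1\pmod 3$, or to state the criterion as: $b^*(n)=0$ if and only if $3\mid n$ or $\mathrm{ord}_p(n)$ is odd for some prime $p\equiv 2\pmod 3$. Either repaired version is what Theorem~\ref{A(n) theorem} actually needs (the corollary in Section 3 gives $A(n)=0$ whenever $3\mid n$ as well), and with either emendation your argument goes through verbatim.
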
 

To prove the original conjecture, Han and Ono recalled the exact formula for the coefficients $a^*(n)$ described in \cite{han2011hook}. The modular form $\mathcal{A}(z)$ is given by 
\begin{equation*}
\tag{2.2}
\mathcal{A}(z)= \eta^8(3z)=\displaystyle\sum_{n=1}^{\infty} a^*(n)q^n:=\displaystyle\sum_{n=0}^{\infty}a(n)q^{3n+1}
\end{equation*} 
where $q:= e^{2 \pi i}$ and $z \in \mathcal{H}$, the upper-half of the complex plane. This normalized series $\mathcal{A}(z)$, such that $a(n) \equiv a^*(3n+1)$, is an example of a special type of modular form. This modular form is in $S_4(\Gamma_0(9))$, the space of weight 4 cusp forms on $\Gamma_0(9)$. $\mathcal{A}(z)$ is a newform with complex multiplication. Using this theory, Han and Ono proved the following theorem. 

\begin{thm}[Theorem 2.1 of \cite{han2011hook}]
\label{Thm 2.1 of H-O}
Assume the notation above. Then the following are true:
\begin{enumerate}
\item If $p \equiv 3$ or $p \equiv 2 \pmod{3}$ is prime, then $a^*(p)=0.$
\item If $p \equiv 1 \pmod{3}$ is prime, then 
\begin{equation*}
a^*(p)=2x^3-18xy^2,
\end{equation*}
where $x$ and $y$ are integers for which $p=x^2+3y^2$ and $x \equiv 1 \pmod{3}$.
\end{enumerate}
\end{thm}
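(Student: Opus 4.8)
The plan is to prove the theorem by CM theory, realizing the weight-$4$ level-$9$ CM newform $\mathcal{A}(z)=\eta^8(3z)$ as the theta series of a Hecke Grossencharacter of $K=\mathbb{Q}(\sqrt{-3})$. Recall that $K$ has class number one, ring of integers $\mathcal{O}_K=\mathbb{Z}[\omega]$ with $\omega=e^{2\pi i/3}$, unit group $\{\pm1,\pm\omega,\pm\omega^2\}$, and discriminant $-3$. First I would invoke the correspondence between CM newforms and Grossencharacters to write $\mathcal{A}(z)=\sum_{\mathfrak{a}}\psi(\mathfrak{a})\,q^{N(\mathfrak{a})}$, summed over the integral ideals $\mathfrak{a}\subset\mathcal{O}_K$, for a Grossencharacter $\psi$ of infinity type $3$ and conductor $\mathfrak{f}=(\sqrt{-3})$; the conductor is forced by the level relation $9=3\cdot N(\mathfrak{f})$. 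To pin $\psi$ down I would set $\psi((\alpha))=\alpha^3\epsilon(\alpha)$ on generators coprime to $\sqrt{-3}$, where $\epsilon$ is a character of $(\mathcal{O}_K/(\sqrt{-3}))^\times\cong\mathbb{F}_3^\times\cong\{\pm1\}$; demanding that $\psi$ depend only on the ideal $(\alpha)$ forces $\zeta^3\epsilon(\zeta)=1$ for every unit $\zeta$, and a short computation (using $\omega\equiv1\pmod{\sqrt{-3}}$) shows $\epsilon$ must be the nontrivial quadratic character, so $\epsilon(\alpha)=1$ exactly when $\alpha\equiv1\pmod{\sqrt{-3}}$.

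With $\psi$ in hand, $a^*(p)=\sum_{N(\mathfrak{a})=p}\psi(\mathfrak{a})$. For part (1): if $p\equiv2\pmod3$ then $p$ is inert, there are no ideals of norm $p$, and $a^*(p)=0$; if $p=3$ then the only ideal of norm $3$ is $(\sqrt{-3})$, which divides the conductor, so $\psi((\sqrt{-3}))=0$ and $a^*(3)=0$. For part (2): a split prime $p\equiv1\pmod3$ factors as $p=\pi\cc{\pi}$, and writing $\pi=x+y\sqrt{-3}$ with $p=x^2+3y^2$ gives $\pi\equiv\cc{\pi}\equiv x\pmod{\sqrt{-3}}$, whence $\epsilon(\pi)=\epsilon(\cc{\pi})$ and
$$a^*(p)=\psi(\pi)+\psi(\cc{\pi})=\epsilon(\pi)\bigl(\pi^3+\cc{\pi}^3\bigr).$$
Expanding $(x\pm y\sqrt{-3})^3$ gives $\pi^3+\cc{\pi}^3=2x^3-18xy^2$. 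Since $p\equiv1\pmod3$ we have $x\not\equiv0\pmod3$, so exactly one of $\pm x$ is $\equiv1\pmod3$; normalizing to that choice makes $\epsilon(\pi)=1$ and yields $a^*(p)=2x^3-18xy^2$. This is internally consistent, since flipping the sign of $x$ negates both $\epsilon(\pi)$ and $2x^3-18xy^2$.

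The hard part will be the very first step, namely rigorously identifying $\eta^8(3z)$ with the theta series of this particular $\psi$, i.e. certifying the infinity type $3$, the conductor $(\sqrt{-3})$, and especially the quadratic twist $\epsilon$. Granting that $\mathcal{A}$ is a weight-$4$ CM newform of level $9$ with trivial Nebentypus (as recorded above and in \cite{martin1997eta}), this identification can be secured either from the classification of CM eta-quotients or by comparing finitely many Fourier coefficients of $\eta^8(3z)$ against $\sum_{\mathfrak{a}}\psi(\mathfrak{a})q^{N(\mathfrak{a})}$ up to the Sturm bound for $S_4(\Gamma_0(9))$. Once the Grossencharacter is matched, everything else, the vanishing at inert and ramified primes and the cube expansion with its sign normalization, is routine.
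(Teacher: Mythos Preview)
Your argument is correct. Note, however, that the present paper does not itself prove this statement: it is quoted as Theorem~2.1 of Han--Ono \cite{han2011hook} and then used as input. So there is no ``paper's own proof'' to compare against here.

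That said, the CM/Grossencharacter framework you set up is exactly the machinery the paper recalls at the start of Section~3 in order to prove the general Theorem~\ref{A(n) theorem}, and your computation is simply the specialization of that framework to the weight-$4$, level-$9$ case; this is also essentially how Han and Ono argue in \cite{han2011hook}. Your determination of the conductor $(\sqrt{-3})$ from the level relation $9=3\cdot N(\Lambda)$, the forcing of the nontrivial $\epsilon$ via $\zeta^3\epsilon(\zeta)=1$ on units, the vanishing at inert and ramified primes, and the expansion $\pi^3+\cc{\pi}^3=2x^3-18xy^2$ with the sign fixed by $x\equiv1\pmod3$ are all correct. The only nonformal step you flag---identifying $\eta^8(3z)$ with the theta series attached to your explicit $\psi$---is easily closed: $S_4(\Gamma_0(9))$ is one-dimensional, so once you know your theta series lands there (trivial Nebentypus follows from $\omega_\psi=\epsilon=\chi_{-3}$ cancelling the $\left(\tfrac{-D}{\bullet}\right)$ factor), the two normalized cusp forms must coincide; a Sturm-bound check is not even needed.
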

The theorem above shows that $a^*(n)$ satisfies the same nonvanishing conditions demonstrated by $b^{*}(n)$ as noted in Lemma~\ref{Nonvanishing (1) Lemma}, proving the original conjecture. 

\section{Proof of Theorem 1.1}
We now briefly recall the theory of newforms with complex multiplication (see Chapter 12 of \cite{henryk1997topics}
or Section 1.2 of \cite{ken2004web}). Let $D<0$ be the fundamental discriminant of an imaginary quadratic field $K= \mathbb{Q}(\sqrt{D})$. Let $\mathcal{O}_K$ be the ring of integers of $K$. Let $\Lambda$ be a nontrivial ideal in $\mathcal{O}_K$ and $I(\Lambda)$ denote the group of fractional ideals prime to $\Lambda$. Then $\phi$ defines a homomorphism $$\phi:I(\Lambda) \rightarrow \mathbb{C}^{\times}$$ such that for each $\alpha \in K^{\times}$ with $\alpha \equiv 1 \pmod{\Lambda}$, we have $$\phi(\alpha\mathcal{O}_K)=\alpha^{k-1}.$$ Let $\omega_{\phi}$ be the Dirichlet character defined as $$\omega_{\phi}(n):=\phi((n))/n^{k-1}$$ for every integer $n$ coprime to $\Lambda$. Consider the function $\Psi(z)$ defined by $$\Psi(z):=\sum_{\mathfrak{a}} \phi(\mathfrak{a})q^{N(\mathfrak{a})}=\sum_{n=1}^{\infty}a(n)q^n,$$ where the sum is over the integral ideals $\mathfrak{a}$ that are prime to $\Lambda$ and $N(\mathfrak{a})$ is the norm of the ideal $\mathfrak{a}$. This function $\Psi(z)$ is a cusp form in $S_k(\Gamma_0(-D \cdot N(\Lambda)), \left(\frac{-D}{\bullet}\right) \omega_{\phi})$. When $p$ does not divide the level, notice that if $p$ is inert in $K$, then $a(p)=0$ \cite{ken2004web}. \\

The cusp form $\Psi(z)$ is a ``newform'' in the sense of Atkin and Lehner \cite{ken2004web}.  Therefore, $\Psi(z)$ is a normalized cusp form that is an eigenform of all the Hecke operators and all the Atkin-Lehner involutions $|_kW(Q_p)$ for primes $p | N$ and $|_kW(N)$. The following theorem describes the vanishing Hecke eigenvalues when there is a prime $p$ such that $p^2$ divides the level.

\begin{thm}[Theorem 2.27 (3) of \cite{ken2004web}]
\label{2.27 Ono}
Suppose $f(z)= \displaystyle\sum_{n=1}^{\infty} a(n)q^n \in S_k^{\text{new}}(\Gamma_0(N))$ is a newform. If $p$ is a prime for which $p^2|N$, then $a(p)=0$. 
\end{thm}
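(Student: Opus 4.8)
\emph{Proof proposal.} This is a foundational result of Atkin--Lehner--Li theory, and the plan is to deduce the vanishing $a(p)=0$ from the orthogonality that defines newforms, together with the Atkin--Lehner involution at $p$. Write $N=p^{\alpha}M$ with $p\nmid M$ and $\alpha=\mathrm{ord}_p(N)\geq 2$, so that in particular $p\mid N/p$. Since $f$ is a newform it is a normalized eigenform of $U_p$, say $f\mid U_p=a(p)f$, and the goal is exactly to show that this eigenvalue vanishes. I would first recall the two degeneracy maps from level $N/p$ into level $N$, namely $g\mapsto g$ and the level-raising operator $V_p\colon g(z)\mapsto g(pz)$; because $\alpha\geq 2$ both land inside $S_k(\Gamma_0(N))$, their images span the old subspace, and $f$ is Petersson-orthogonal to every oldform. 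On $q$-expansions one has the elementary identities $U_pV_p=\mathrm{id}$ and $V_pU_p=$ the projection onto the Fourier expansions supported on exponents divisible by $p$, whose range lies in $\im V_p$ and is therefore old.

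Second, I would introduce the Atkin--Lehner involution $W=W_{p^{\alpha}}$, which preserves $S_k^{\mathrm{new}}(\Gamma_0(N))$ and satisfies $f\mid W=\lambda f$ with $\lambda\in\{\pm 1\}$ (the pseudo-eigenvalue is real of absolute value $1$ since the Nebentypus is trivial). The heart of the matter is the interaction of $U_p$ with the Petersson pairing and with $W$: one shows that the Petersson adjoint $U_p^{\ast}$ equals, up to an explicit power of $p$, the conjugate $W V_p W^{-1}$ of the level-raising operator. Granting this, I would evaluate $\langle f\mid U_p,\ f\rangle$ in two ways. On one hand it is $a(p)\langle f,f\rangle$; on the other, transferring $U_p$ across the pairing and using $f\mid W=\lambda f$ rewrites it as $\langle f,\ h\rangle$ where $h$ is produced by applying $V_p$ and hence lies in the old subspace attached to level $N/p$. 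Orthogonality of the newform $f$ to the old subspace gives $\langle f,h\rangle=0$, and since $\langle f,f\rangle\neq 0$ we conclude $a(p)=0$.

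The main obstacle is making this second step precise: one must nail down the exact scalar in the adjoint relation and verify the commutation of $U_p$ with $W_{p^{\alpha}}$ carefully enough to see that the output genuinely lands in the old subspace coming from level $N/p$. This is exactly where $p^2\mid N$ enters: for $\alpha\geq 2$ the operator $V_p$ carries the strictly lower level $N/p$ into level $N$ and manufactures oldforms, whereas for $\alpha=1$ the same manipulation sends $f$ back to a multiple of itself and yields instead the nonvanishing Atkin--Lehner relation $a(p)^2=p^{k-2}$. A cleaner conceptual route to the same conclusion is adelic: the cuspidal automorphic representation $\pi=\bigotimes_v\pi_v$ attached to $f$ has local component $\pi_p$ of conductor exponent $\alpha\geq 2$, hence $\pi_p$ is ramified with trivial local $L$-factor $L_p(s,f)=1$; comparing with the $p\mid N$ Euler factor $L_p(s,f)=(1-a(p)p^{-s})^{-1}$ forces $a(p)=0$ at once. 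I would run the classical inner-product argument as the main line and record this local computation as the structural reason that $\alpha\geq 2$ is decisive.
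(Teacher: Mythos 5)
The paper itself contains no proof of this statement: it is quoted as Theorem 2.27\,(3) of \cite{ken2004web}, which in turn rests on Atkin--Lehner--Li theory, so your proposal has to be measured against that standard argument. Measured that way, your main line has a genuine gap at exactly the step you call the heart of the matter. The claimed identity ``$U_p^{\ast}=p^{?}\,W V_p W^{-1}$'' cannot be correct as stated: $V_p$ raises the level (it maps $S_k(\Gamma_0(L))$ into $S_k(\Gamma_0(Lp))$), so $W V_p W^{-1}$ does not even define an endomorphism of $S_k(\Gamma_0(N))$ and cannot be the Petersson adjoint of $U_p$, whatever power of $p$ is inserted. The correct adjoint relation is $U_p^{\ast}=W_N U_p W_N^{-1}$ with $W_N$ the Fricke involution, and running your two-sided evaluation of $\langle f\,|\,U_p,\,f\rangle$ with it gives nothing: using $f\,|\,W_N=\pm f$ one finds $f\,|\,U_p^{\ast}=a(p)f$, hence only $a(p)\langle f,f\rangle=\overline{a(p)}\langle f,f\rangle$, i.e.\ the reality of $a(p)$. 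No oldform is ever produced, and the argument cannot close. Relatedly, your diagnosis of where $p^2\,|\,N$ enters is misplaced: the degeneracy maps from level $N/p$ into level $N$ exist whenever $p\,|\,N$, not only when $\alpha\geq 2$.

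The missing idea is the actual lever of the Atkin--Lehner--Li proof: when $p^2\,|\,N$, the operator $U_p$ \emph{lowers the level}, i.e.\ $U_p$ maps $S_k(\Gamma_0(N))$ into $S_k(\Gamma_0(N/p))$. This is an elementary coset computation: writing $U_p$ via the matrices $\alpha_j=\sm{1}{j}{0}{p}$, for each $\gamma=\sm{a}{b}{cN/p}{d}\in\Gamma_0(N/p)$ one solves $\alpha_j\gamma=\gamma_j\alpha_{j'}$ with $\gamma_j\in\Gamma_0(N)$ and $j\mapsto j'$ a bijection modulo $p$, and the solvability uses precisely $p\,|\,(N/p)$, i.e.\ $p^2\,|\,N$; consequently $f\,|\,U_p$ is $\Gamma_0(N/p)$-invariant. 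Granting this, the theorem is immediate: $a(p)f=f\,|\,U_p$ lies in $S_k(\Gamma_0(N/p))$, hence in the old subspace at level $N$, and orthogonality of the newform $f$ to oldforms forces $a(p)\langle f,f\rangle=\langle f\,|\,U_p,f\rangle=0$, so $a(p)=0$. (This is also why, for $\alpha=1$, one gets the nonvanishing relation $a(p)^2=p^{k-2}$ instead: there $U_p$ does not lower the level and is instead tied to $W_p$.) Your adelic aside does reach the correct conclusion, but only by invoking Casselman's newvector theory and the identification of the $p$-Euler factor $(1-a(p)p^{-s})^{-1}$ with $L(s,\pi_p)$ --- machinery far heavier than the statement, and whose classical verification runs through exactly the Atkin--Lehner--Li facts at issue; as written it is a consistency check rather than a self-contained substitute for the broken main line.
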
 

This information gives the following nonvanishing conditions on newforms with complex multiplication. 

\begin{lem}
\label{Nonvanishing A(p) Lemma}
Suppose that $f(z)=\displaystyle\sum_{n=1}^{\infty} A(n)q^n$ is an even weight newform with trivial Nebentypus and complex multiplication by $\mathbb{Q}(\sqrt{-3})$ with level of the form $3^s$ where $s\geq 2$. Then $A(p)=0$ if and only if $p=3$ or $p \equiv 2 \pmod{3}$ is prime. 
\end{lem}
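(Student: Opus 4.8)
The plan is to split the primes $p$ into the three classes governed by their behavior in $K=\mathbb{Q}(\sqrt{-3})$: the ramified prime $p=3$, the inert primes $p\equiv 2\pmod 3$, and the split primes $p\equiv 1\pmod 3$. The first two classes should give $A(p)=0$ and will follow from results already recalled, while the split class, where I must prove $A(p)\neq 0$, is where the real work lies.

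For the ramified and inert primes I would argue as follows. When $p=3$, the hypothesis $s\geq 2$ gives $3^2\mid N$, so Theorem~\ref{2.27 Ono} applies verbatim and yields $A(3)=0$. When $p\equiv 2\pmod 3$, the prime $p$ is inert in $K$ and, being different from $3$, is coprime to $N=3^s$; since $f$ has the Hecke-character description $\sum_{\mathfrak a}\phi(\mathfrak a)q^{N(\mathfrak a)}$ and no integral ideal has norm equal to an inert prime, the coefficient $A(p)$ vanishes (this is exactly the remark recalled from \cite{ken2004web}).

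The split case is the crux. For $p\equiv 1\pmod 3$ we have $(p)=\mathfrak p\bar{\mathfrak p}$ with $\mathfrak p\neq\bar{\mathfrak p}$, both of norm $p$, so $A(p)=\phi(\mathfrak p)+\phi(\bar{\mathfrak p})$. I would argue by contradiction: assume $A(p)=0$. Combining $\phi(\bar{\mathfrak p})=-\phi(\mathfrak p)$ with $\phi(\mathfrak p)\phi(\bar{\mathfrak p})=\phi((p))=\omega_\phi(p)p^{k-1}$ gives $\phi(\mathfrak p)^2=-\omega_\phi(p)p^{k-1}$. Since $K$ has class number one I may write $\mathfrak p=(\pi)$, and a brief ray-class argument (raise $\mathfrak p$ to the order of its class, apply $\phi((\alpha))=\alpha^{k-1}$ for $\alpha\equiv 1\pmod\Lambda$, and use that the units of $\mathcal{O}_K$ are roots of unity) shows $\phi(\mathfrak p)=\zeta\,\pi^{k-1}$ for some root of unity $\zeta$. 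Substituting and using $p^{k-1}=(\pi\bar\pi)^{k-1}$ yields $(\pi/\bar\pi)^{k-1}=-\omega_\phi(p)\zeta^{-2}$, whose right-hand side is a root of unity. Hence $\pi/\bar\pi$ is itself a root of unity lying in $K$. But the only roots of unity in $\mathbb{Q}(\sqrt{-3})$ are the sixth roots of unity, so $\pi$ and $\bar\pi$ are associates and $\mathfrak p=\bar{\mathfrak p}$, contradicting that $p$ splits. Thus $A(p)\neq 0$, and assembling the three cases gives the claimed equivalence.

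The main obstacle is this split case; the ramified and inert cases are immediate citations. The delicate step is converting the identity $\phi(\mathfrak p)^2=-\omega_\phi(p)p^{k-1}$ into the assertion that $\pi/\bar\pi$ is a root of unity, after which the scarcity of roots of unity in $K$ closes the argument. I expect the only points requiring genuine care to be the verification that $\phi(\mathfrak p)/\pi^{k-1}$ is a root of unity and the bookkeeping of the finite-order character $\omega_\phi$, both routine given the Hecke-character formalism recalled above.
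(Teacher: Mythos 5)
Your proof is correct, and it is in one respect more complete than the paper's own. For the two vanishing cases you argue exactly as the paper does: $A(3)=0$ from Theorem~\ref{2.27 Ono} because $s\geq 2$ gives $3^2\mid 3^s$, and $A(p)=0$ for $p\equiv 2\pmod{3}$ because such $p$ is inert in $\mathbb{Q}(\sqrt{-3})$ and no integral ideal has norm $p$. The paper's proof of the lemma stops there: the converse direction, that $A(p)\neq 0$ when $p\equiv 1\pmod{3}$, is not proved inside the lemma at all, but is deferred to Claim (4) of Corollary~\ref{Corollary on A(n)}, where the argument is that if $A(p)=\phi(\mathfrak{p})+\phi(\bar{\mathfrak{p}})=0$ then the relevant algebraic number is totally imaginary, hence of the form $\beta\sqrt{-3}$, and taking norms produces $3\beta^2$ equal to a power of $p$, which is impossible. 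Your treatment of the split case is a genuinely different (and more carefully justified) contradiction: from $\phi(\bar{\mathfrak{p}})=-\phi(\mathfrak{p})$ and $\phi(\mathfrak{p})\phi(\bar{\mathfrak{p}})=\omega_\phi(p)p^{k-1}$, together with $\phi(\mathfrak{p})=\zeta\pi^{k-1}$ (class number one plus finiteness of the ray class group), you conclude that $\pi/\bar{\pi}$ is a root of unity of $\mathbb{Q}(\sqrt{-3})$, whence $\mathfrak{p}=\bar{\mathfrak{p}}$, contradicting that $p$ splits. What your route buys is rigor and self-containedness --- the paper's Claim (4) never defines its $\alpha$ and $\beta$, and writes $p=3\beta^2$ where an equation like $p^{k-1}=3\beta^2$ is what actually follows; what the paper's route buys is brevity, reducing the split case to the one-line arithmetic fact that a purely imaginary element of $\mathbb{Q}(\sqrt{-3})$ has norm divisible by $3$, which cannot equal a power of a prime $p\neq 3$.
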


\begin{proof}[Proof of Lemma~\ref{Nonvanishing A(p) Lemma}]
The level of $f(z)$ is $3^s$ and therefore $3$ is the only prime that divides the level. Since $k\geq 2$, we know $3^2$ always divides the level, therefore by Theorem~\ref{2.27 Ono} in \cite{ken2004web}, $A(3)=0$. When $p \equiv 2 \pmod{3}$ for $p \neq 3$ prime, $p$ is inert and therefore $A(p)=0$. 
\end{proof}

\begin{cor}
\label{Corollary on A(n)} 
The following are true about $A(n)$. 
\begin{enumerate}
\item If $m$ and $n$ are coprime positive integers, then $$A(mn)=A(m)A(n).$$
\item For every positive integer $t$, we have that $A(3^t)=0$. 
\item If $p \equiv 2 \pmod{3}$ is prime and $t$ is a positive integer, then $A(p^t)=0$ if $t$ is odd and $A(p^t) \neq 0$ if $t$ is even. 
\item If $p \equiv 1 \pmod{3}$, then $A(p^t) \neq 0$. 
\end{enumerate}
\end{cor}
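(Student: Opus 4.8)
The plan is to handle the four assertions using that a newform is a normalized Hecke eigenform, the Hecke recursion at good primes, and the Hecke-character description of the coefficients of a CM form. Claim (1) is immediate: a newform is a normalized simultaneous eigenform for all the Hecke operators, and the Fourier coefficients of any normalized eigenform are multiplicative, so $A(mn)=A(m)A(n)$ whenever $\gcd(m,n)=1$. For (2), the prime $3$ is the only prime dividing the level $N=3^s$, and for a newform the coefficients at a prime dividing the level satisfy $A(3^t)=A(3)^t$; since Lemma~\ref{Nonvanishing A(p) Lemma} gives $A(3)=0$, we get $A(3^t)=0$ for all $t\geq 1$.

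Claims (3) and (4) both come from the Hecke recursion at a good prime $p\neq 3$. Because the Nebentypus is trivial and $p\nmid N$, we have $A(p^{t+1})=A(p)A(p^t)-p^{k-1}A(p^{t-1})$ with $A(p^0)=1$ and $A(p^1)=A(p)$. Letting $\alpha,\beta$ be the roots of $X^2-A(p)X+p^{k-1}$, this gives the closed form $A(p^t)=(\alpha^{t+1}-\beta^{t+1})/(\alpha-\beta)$, so $A(p^t)=0$ precisely when $(\alpha/\beta)^{t+1}=1$, i.e. precisely when $\alpha/\beta$ is a root of unity whose order divides $t+1$. For $p\equiv 2\pmod 3$, Lemma~\ref{Nonvanishing A(p) Lemma} gives $A(p)=0$, hence $\beta=-\alpha$ and $\alpha/\beta=-1$; thus $(\alpha/\beta)^{t+1}=1$ exactly when $t$ is odd, which is (3), and one even reads off $A(p^{2j})=(-p^{k-1})^j\neq 0$.

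The substantive case, and the main obstacle, is (4): for $p\equiv 1\pmod 3$ I must show $\alpha/\beta$ is never a root of unity. Here I would use the CM structure. Such $p$ splits in $K=\mathbb{Q}(\sqrt{-3})$ as $(p)=\mathfrak{p}\bar{\mathfrak{p}}$ with $\mathfrak{p}\neq\bar{\mathfrak{p}}$, and the Hecke-character description gives $\{\alpha,\beta\}=\{\phi(\mathfrak{p}),\phi(\bar{\mathfrak{p}})\}$; since the coefficients are real and $\alpha\beta=p^{k-1}>0$ while $|\alpha|=|\beta|=p^{(k-1)/2}$, the two parameters are complex conjugates, $\beta=\bar\alpha$ (this also rules out a double root). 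Hence $\alpha/\beta=\alpha^2/p^{k-1}$. Using that $K$ has class number one, write $\alpha=u\,\pi^{k-1}$ with $\pi$ a generator of $\mathfrak{p}$ of norm $p$ and $u$ a root of unity, so $\alpha/\bar\alpha=(u/\bar u)(\pi/\bar\pi)^{k-1}$. Were this a root of unity, then $\pi/\bar\pi=\pi^2/p$ would be as well, forcing $(\pi)^2=(p)=\mathfrak{p}\bar{\mathfrak{p}}$ and hence $\mathfrak{p}=\bar{\mathfrak{p}}$, contradicting that $p$ splits. Therefore $\alpha/\beta$ is not a root of unity, so $(\alpha/\beta)^{t+1}\neq 1$ for every $t\geq 1$ and $A(p^t)\neq 0$ for all $t$, which is (4). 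The one point deserving care is the passage to $\beta=\bar\alpha$ together with the exact normalization $\alpha=u\,\pi^{k-1}$ of the Hecke character, which is where the hypotheses of trivial Nebentypus and CM by $\mathbb{Q}(\sqrt{-3})$ are genuinely used.
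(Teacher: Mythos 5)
Your proof is correct. For claims (1)--(3) it is essentially the paper's argument: multiplicativity of normalized Hecke eigenforms, the relation $A(3^t)=A(3)^t$ at the unique bad prime combined with $A(3)=0$ from Lemma~\ref{Nonvanishing A(p) Lemma}, and the good-prime Hecke recursion specialized to $A(p)=0$ for $p\equiv 2\pmod{3}$ (your closed form $A(p^{2j})=(-p^{k-1})^j$ is just the solved version of the paper's induction). Where you genuinely diverge is claim (4). The paper only shows $A(p)\neq 0$ (assuming $A(p)=0$ makes the relevant CM parameter totally imaginary, forcing $p=3\beta^2$, which is impossible) and then asserts that the full claim ``follows by induction.'' That induction is not routine: the recursion $A(p^{t+1})=A(p)A(p^t)-p^{k-1}A(p^{t-1})$ does not propagate nonvanishing, and $A(p)\neq 0$ alone is compatible with $A(p^2)=0$ (this happens exactly when the ratio $\alpha/\beta$ of the two Frobenius parameters is a primitive cube root of unity). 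Your argument supplies precisely the missing ingredient: using the Hecke character description, class number one, and $\alpha=u\,\pi^{k-1}$ with $(\pi)=\mathfrak{p}$, you show that $\alpha/\beta$ cannot be a root of unity, since otherwise $(\pi)^2=(p)$ would give $\mathfrak{p}=\bar{\mathfrak{p}}$, contradicting that $p\equiv 1\pmod{3}$ splits; this excludes $A(p^t)=0$ for every $t$ at once. So your route is more complete than the paper's at its weakest point, at the cost of invoking finer CM structure. One minor wrinkle: your parenthetical that $\beta=\bar{\alpha}$ ``also rules out a double root'' is not by itself conclusive (a priori both parameters could be real and equal), but this is harmless, because your main argument excludes $\alpha/\beta=1$ along with all other roots of unity, and that is what the closed form $A(p^t)=(\alpha^{t+1}-\beta^{t+1})/(\alpha-\beta)$ actually requires.
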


\begin{proof}[Proof of Corollary~\ref{Corollary on A(n)}]
Claim $(1)$ is well known to hold for all normalized Hecke eigenforms. \\
Claim $(2)$ follows as $A(3)=0$. \\
To prove Claim $(3)$, observe that every newform is a Hecke eigenform. Moreover, since $A(1)=1$, the Hecke eigenvalue of $T(p)$ is $A(p)$.
Therefore, for every integer $n$ and prime $p \neq 3$, we have that $$A(p)A(n)= A(pn)+p^{k-1}A(n/p).$$ The left hand side of the equation is the statement that $A(p)$ is the Hecke eigenvalue. The right hand side of the equation is the action of the Hecke operator $T(p)$. Let $n=p^t$ and $p\equiv 2 \pmod{3}$ be prime. Since $A(p)=0$ for $p \equiv 2 \pmod{3}$, this equation becomes $$0= A(p^{t+1})+p^{k-1}A(p^{t-1}).$$
Claim $(3)$ follows from induction as $A(1)=1$ and $A(p)=0$. \\
To prove Claim $(4)$, let $p$ be a prime such that $p \equiv 1 \pmod{3}$. Suppose that $A(p)=0$; this implies that $\alpha$ is totally imaginary. But then $$p = (\beta \sqrt{-3)})(-\beta\sqrt{-3})=3\beta^2,$$
which is false. Claim $(4)$ then follows by induction. 
\end{proof}

\begin{proof}[Proof of Theorem~\ref{A(n) theorem}]
The theorem follows by combining Lemma~\ref{Nonvanishing (1) Lemma}, Corollary~\ref{Corollary on A(n)} and Lemma~\ref{Nonvanishing A(p) Lemma}. 
\end{proof}

\subsection{The series in (\ref{1.3})}
We normalize the function $c(n)$ using the following series, 
\begin{equation*}
\mathcal{C}(z)= \displaystyle\sum_{n=1}^{\infty} c^*(n) q^n := \displaystyle\sum_{n=0}^{\infty} c(n) q^{3n+1}.
\end{equation*}
The series $\mathcal{C}(z)$ is a modular form given by 
\begin{equation*}
\mathcal{C}(z)= \eta^2(3z) \eta^2(9z) = \displaystyle\sum_{n=1}^{\infty} c^*(n) q^n.
\end{equation*}

In \cite{martin1997eta}, Martin and Ono give a complete description of all weight 2 newforms that are products and quotients of the Dedekind eta-function. The descriptions in \cite{martin1997eta} include formulas for the $p^{th}$ coefficients. Since these coefficients are Hecke multiplicative, it suffices to give the formula for only $p$ prime. Specifically, for $\mathcal{C}(z)$, we have the following theorem. 
\begin{thm}[Theorem 2 in \cite{martin1997eta}]
Assuming the notation above, the following are true. 
\begin{enumerate}
\item If $p \equiv 2 \pmod{3}$, then $c^*(p)=0$. 
\item If $p \equiv 1 \pmod{3}$, then $c^*(p)=2m+n$ where $p=m^2+mn+n^2$ and $m \equiv 1 \pmod{3}$ and $n \equiv 0 \pmod{3}$.
\end{enumerate} 
\end{thm}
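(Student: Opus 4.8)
The plan is to identify $\mathcal{C}(z)=\eta^2(3z)\eta^2(9z)$ with the unique weight $2$ newform of level $27$ and then read off its coefficients from the associated Hecke Grossencharacter, exactly as in the complex-multiplication dictionary recalled at the start of Section~3. First I would verify, using the standard transformation and order-of-vanishing criteria for eta-quotients (Ligozat's theorem, as systematized in \cite{martin1997eta}), that $\mathcal{C}(z)$ is a holomorphic cusp form of weight $2$ on $\Gamma_0(27)$ with trivial Nebentypus. Here the arguments are $\delta\in\{3,9\}$ with exponents $r_\delta=2$, so the weight is $\tfrac12(2+2)=2$; the congruences $\sum_\delta \delta r_\delta\equiv 0$ and $\sum_\delta (N/\delta)r_\delta\equiv 0\pmod{24}$ both hold at $N=27$; the product $\prod_\delta \delta^{r_\delta}=3^2\cdot 9^2=27^2$ is a perfect square, forcing trivial Nebentypus; and one checks that the order of vanishing at each cusp of $X_0(27)$ is positive.

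Next I would establish the newform and CM structure. Since $X_0(27)$ has genus $1$ and the proper divisors $1,3,9$ of $27$ all give $S_2(\Gamma_0(\cdot))=0$, the space $S_2(\Gamma_0(27))$ is one-dimensional and entirely new; as $\mathcal{C}(z)=q+\cdots$ is a nonzero element, it is the unique normalized newform there. That newform corresponds to the elliptic curve $y^2+y=x^3-7$, which has CM by $K=\mathbb{Q}(\sqrt{-3})$. Following the theory recalled in Section~3, $\mathcal{C}(z)$ therefore arises from a Hecke character $\phi$ of $K$ of weight $k=2$, so that $\phi(\alpha\mathcal{O}_K)=\alpha$ whenever $\alpha\equiv 1\pmod\Lambda$, and $\mathcal{C}(z)=\sum_{\mathfrak{a}}\phi(\mathfrak{a})q^{N(\mathfrak{a})}$. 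Since the level equals $|D|\cdot N(\Lambda)=3\cdot N(\Lambda)=27$, the conductor is $\Lambda=(3)$ with $N(\Lambda)=9$.

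The coefficient formulas then follow from splitting behaviour in $K$. If $p\equiv 2\pmod 3$ then $\left(\frac{-3}{p}\right)=-1$, so $p$ is inert in $K$; as noted for CM forms, $c^*(p)=0$, which is part $(1)$. If $p\equiv 1\pmod 3$ then $p$ splits as $p\mathcal{O}_K=\mathfrak{p}\overline{\mathfrak{p}}$ with $\mathfrak{p}=(\pi)$ (the class number of $K$ is one), and $c^*(p)=\phi(\mathfrak{p})+\phi(\overline{\mathfrak{p}})$. Writing $\mathcal{O}_K=\mathbb{Z}[\zeta_6]$ with $\zeta_6=\tfrac{1+\sqrt{-3}}{2}$ a primitive sixth root of unity, so that $\zeta_6+\overline{\zeta_6}=1$ and $N(m+n\zeta_6)=m^2+mn+n^2$, I would choose the generator $\pi=m+n\zeta_6$ satisfying $\pi\equiv 1\pmod 3$. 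The congruence $\pi\equiv 1\pmod{(3)}$ is precisely $m\equiv 1\pmod 3$ and $n\equiv 0\pmod 3$, and for such a normalized generator the defining property of $\phi$ gives $\phi(\mathfrak{p})=\pi$ directly; one checks that $\overline{\pi}=(m+n)-n\zeta_6$ is likewise normalized, so $\phi(\overline{\mathfrak{p}})=\overline{\pi}$. Hence $c^*(p)=\pi+\overline{\pi}=2m+n(\zeta_6+\overline{\zeta_6})=2m+n$, which is part $(2)$.

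The main obstacle is pinning down the arithmetic of the Hecke character rather than any hard analysis: I must confirm that the conductor is exactly $\Lambda=(3)$ and that the infinity type is weight $1$, because it is precisely these facts that reduce $\phi(\mathfrak{p})$ to the generator $\pi$ normalized by $\pi\equiv 1\pmod 3$ (and not to a unit multiple or a twist thereof). The enabling observation is that the six units $\{\pm 1,\pm\zeta_6,\pm\zeta_6^2\}$ map bijectively onto $(\mathcal{O}_K/(3))^{\times}$, so among the associates of any $\pi$ coprime to $3$ exactly one satisfies $\pi\equiv 1\pmod 3$; this is what makes the normalization in part $(2)$ well defined and forces the sign in the trace to come out as $2m+n$. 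Once the conductor and infinity type are verified, the two formulas drop out, and everything else is the routine eta-quotient-to-CM-newform identification.
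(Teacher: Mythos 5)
Your proposal is correct, but there is an important structural point: the paper itself does not prove this statement at all. It is quoted as Theorem 2 of \cite{martin1997eta}, and the paper's entire ``proof'' is that citation; the formula is then simply used in Section 3.1 to show $c^*(p)\mid a^*(p)$. So there is no internal argument to compare against --- what you have produced is a reconstruction of the proof that the cited reference supplies, and it is sound. The Ligozat-type checks are right ($\sum_\delta \delta r_\delta=\sum_\delta (N/\delta)r_\delta=24$, $3^2\cdot 9^2$ a perfect square, all cusp orders equal to $1$), so $\eta^2(3z)\eta^2(9z)\in S_2(\Gamma_0(27))$; that space is $1$-dimensional and entirely new since $X_0(27)$ has genus $1$ while $X_0(1),X_0(3),X_0(9)$ have genus $0$; the units $\{\pm 1,\pm\zeta_6,\pm\zeta_6^2\}$ do biject onto $(\mathcal{O}_K/(3))^\times$, so each ideal prime to $3$ has a unique generator $\pi\equiv 1\pmod 3$, the defining property of $\phi$ gives $\phi(\mathfrak{p})=\pi$ and $\phi(\overline{\mathfrak{p}})=\overline{\pi}$, and $c^*(p)=\pi+\overline{\pi}=2m+n$. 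This checks against the displayed expansion: for $p=7$ one has $(m,n)=(-2,3)$ and $2m+n=-1=c^*(7)$; for $p=13$, $(m,n)=(1,3)$ and $2m+n=5=c^*(13)$. One streamlining remark: you invoke modularity of $y^2+y=x^3-7$ together with the converse direction of the CM dictionary (every CM newform comes from a Hecke character, i.e.\ Deuring/Ribet), but neither is needed, and the paper's Section 3 only recalls the forward direction. Since $\dim S_2(\Gamma_0(27))=1$, you can instead construct the theta series $\Psi(z)=\sum_{\mathfrak{a}}\phi(\mathfrak{a})q^{N(\mathfrak{a})}$ of the explicit character $\phi$ of conductor $(3)$ and infinity type $\alpha\mapsto\alpha$; the theory quoted in Section 3 places it in $S_2(\Gamma_0(27))$ with Nebentypus $\left(\frac{-3}{\bullet}\right)\omega_\phi$, which is trivial because $\omega_\phi(n)=\phi((n))/n=\left(\frac{n}{3}\right)$, and then $\mathcal{C}(z)=\Psi(z)$ follows by comparing leading coefficients in a one-dimensional space. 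That variant is fully self-contained given the background the paper actually states.
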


Recall \ref{Thm 2.1 of H-O} from \cite{han2011hook}, gave the following conditions on the coefficients of $\mathcal{A}(z)$:
If $p \equiv 1 \pmod{3}$ is prime, then 
\begin{equation*}
a^*(p)=2x^3-18xy^2,
\end{equation*}
where $x$ and $y$ are integers for which $p=x^2+3y^2$ and $x \equiv 1 \pmod{3}$.

Here we show that $c^*(p) | a^*(p)$ for primes $p \equiv 1 \pmod{3}$ and $n$ even:
\begin{align*}
p &=m^2+mn+n^2 \\
&=\left(m + \frac{n}{2}\right)^2+3\left(\frac{n}{2}\right)^2 \\
&=x^2+3y^2.
\end{align*}

Let $x=\left(m + \frac{n}{2}\right)$ and $y=\displaystyle\frac{n}{2}$. Then 
\begin{align*}
a^*(p)&=2x^3-18xy^2 \\
&=2\left(m + \frac{n}{2}\right)^3-18\left(m + \frac{n}{2}\right)\left(\frac{n}{2}\right)^2 \\
&=(2m+n)(m+2n)(m-n) \\
&=c^*(p)(m+2n)(m-n).
\end{align*}

Since $m \equiv 1 \pmod{3}$ and $n \equiv 0 \pmod{3}$, we have $a^*(p)\equiv c^*(p) \pmod{3}$ and as mentioned in a remark, $c^*(p) | a^*(p)$. 

\bibliographystyle{plain}
\bibliography{Hanrefs.bib}

\end{document}